\theoremstyle{plain}
\newtheorem{thm}{Theorem}[section]
\newtheorem{theorem}[thm]{Theorem}
\newtheorem{lemma}[thm]{Lemma}
\newtheorem{proposition}[thm]{Proposition}
\theoremstyle{definition}
\newtheorem{definition}[thm]{Definition}
\newtheorem{remark}[thm]{Remark}
\newtheorem{example}[thm]{Example}
\newtheorem{thevarthm}[thm]{\varthmname}
\newenvironment{varthm*}[1]{\trivlist\item[]{\bf #1.}\it}{\endtrivlist}
\renewcommand\geq{\geqslant}
\renewcommand\leq{\leqslant}
\newcommand\be{\begin{eqnarray*}}
\newcommand\ee{\end{eqnarray*}}
\newcommand\Q{\mathbb Q}
\newcommand\R{\mathbb R}
\renewcommand\P{\mathbb P}
\newcommand\calo{{\mathcal O}}
\newcommand\newop[2]{\def#1{\mathop{\rm #2}\nolimits}}
\newop\log{log}
\newop\ord{ord}
\newop\Gal{Gal}
\newop\SL{SL}
\newop\Bl{Bl}
\newop\mult{mult}
\newop\mass{mass}
\newop\div{div}
\newop\codim{codim}
\newop\sing{sing}
\newop\vdim{vdim}
\newop\edim{edim}
\newop\Ass{Ass}
\newop\size{size}
\newop\reg{reg}
\newop\satdeg{satdeg}
\newop\supp{supp}
\newop\Neg{Neg}
\newop\Nef{Nef}
\newop\Nefh{Nef_H}
\newop\Eff{Eff}
\newop\Zar{Zar}
\newop\MB{MB}
\newop\MBxC{MB\mathit{(x,C)}}
\newop\NnB{NnB}
\newop\Bigg{Big}
\newop\Effbar{\overline{\Eff}}
\newcommand\eqnref[1]{(\ref{#1})}
\def\keywordname{{\bfseries Keywords}}%
\def\keywords#1{\par\addvspace\medskipamount{\rightskip=0pt plus1cm
\def\and{\ifhmode\unskip\nobreak\fi\ $\cdot$
}\noindent\keywordname\enspace\ignorespaces#1\par}}
\def\subclassname{{\bfseries Mathematics Subject Classification
(2000)}\enspace}
\def\subclass#1{\par\addvspace\medskipamount{\rightskip=0pt plus1cm
\def\and{\ifhmode\unskip\nobreak\fi\ $\cdot$
}\noindent\subclassname\ignorespaces#1\par}}
\begin{document}
\title{Minkowski bases on algebraic surfaces with rational polyhedral pseudo-effective cone}
\author{Piotr Pokora\thanks{Research of Pokora was partially done during his PhD Internship in Warsaw Center of Mathematic and Computer Sciences.},
Tomasz Szemberg\thanks{Research of Szemberg was partially supported
by NCN grant UMO-2011/01/B/ST1/04875}}

\maketitle
\thispagestyle{empty}

\begin{abstract}
   The purpose of this note is to study the number of elements in Minkowski bases
   on algebraic surfaces with rational polyhedral pseudo-effective cone.
\keywords{Okounkov bodies, Minkowski basis, del Pezzo surfaces}
\subclass{14C20, 14M25}
\end{abstract}

\section{Introduction}
\label{sec:intro}
   Lazarsfeld and Musta\c t\u a  in \cite{LM09} and Kaveh and Khovanskii in \cite{KK} initiated a systematic study
   of Okounkov bodies. These are convex bodies $\Delta(D)$ in $\R^n$
   attached to big divisors $D$ on a smooth projective variety $X$
   of dimension $n$. They depend on the choice of a flag of subvarieties
   $(Y_n,Y_{n-1},\ldots,Y_1)$ of codimensions $n,n-1,\ldots,1$ in X respectively,
   such that $Y_n$ is a non-singular point of each of the $Y_i$'s. We refer to
   \cite{LM09} for details of the construction and a very enjoyable
   introduction to this circle of ideas.

   Okounkov bodies are subject of intensive ongoing research. \L uszcz-\'Swidecka
   observed in \cite{Pat} that for a del Pezzo surface there are finitely many
   basic bodies, called the Minkowski basis, such that all other bodies are obtained as their Minkowski sums
   (therefor the name of the basis).
   Building upon these ideas, \L uszcz-\'Swidecka and Schmitz introduced in \cite{PatDav}
   an effective algorithmic construction of Minkowski bases for algebraic surfaces
   with rational polyhedral pseudo-effective cone $\Effbar(X)$.

   In the present note we consider a natural question of how many elements there are
   in a Minkowski bases in the set-up of \cite{PatDav}. The answer is closely related to the partition, governed by Zariski decompositions, of the
   big cone of arbitrary smooth projective surfaces introduced in \cite{BKS04}.
\section{Preliminaries}
\label{sec:prelim}
   In this section we introduce the notation and collect some basic ideas underlying the present note.
   By a \emph{curve} we mean here an irreducible and reduced complete subscheme of dimension $1$.
   For a divisor $D$ on a smooth projective surface $X$ we denote by $D^\perp$ the set
   of curves intersecting $D$ with multiplicity zero, i.e. $D^\perp:=\left\{C\subset X:\; D.C=0\right\}$.

   We begin with a tool fundamental for understanding linear series
   on algebraic surfaces.
\begin{definition}[Zariski decomposition]
   Let $X$ be a smooth projective surface and let $D$ be a pseudo-effective $\Q$--divisor on $X$.
   Then there exist $\Q$--divisors $P_D$ and $N_D$ such that
   \begin{itemize}
   \item[a)] $D=P_D+N_D$;
   \item[b)] $P_D$ is a nef divisor and $N_D$ is either zero or it is supported
      on a union of curves $N_1,\ldots,N_r$ with negative definite
      intersection matrix;
   \item[c)] $N_i\in(P_D)^{\perp}$ for each $i=1,\ldots,r$.
   \end{itemize}
\end{definition}
   Let $(x,C)$ be a flag on a surface $X$. Let $D$ be a big divisor on $X$ with
   Zariski decomposition $D=P_D+N_D$. Lazarsfeld and Musta\c t\u a give in \cite{LM09}
   the description of $\Delta(D)$ as the area enclosed between the graphs
   of functions $\alpha(t)$ and $\beta(t)$ defined for real numbers $t$
   between $0$ and $\sup\left\{s\in\R:\; D-sC\; \mbox{ is effective}\right\}$
   as follows.
   $$\alpha(t)=\ord_x(N_{D-tC})\;\;\mbox{ and }\;\; \beta(t)=\alpha(t)+ {\rm vol}_{X|C}(P_{D-tC}) = \alpha(t) + P_{D-tC}\cdot C.$$
   Recently, the authors of \cite{PatDav} presented a different approach
   to describing Okounkov bodies for a certain class of smooth complex projective surfaces.
\begin{theorem}[\L uszcz-\'Swidecka, Schmitz]
\label{Mink}
   Let $X$ be a smooth complex projective surface with $\overline{{\rm Eff}(X)}$ rational polyhedral. Given a flag $(x,C)$,
   where $x$ is a general point and $C$ is a big and nef curve on $X$
   there exists a finite set of nef divisors $\MBxC = \{P_{1}, ..., P_{r}\}$ such that for a big and nef $\mathbb{R}$-divisor $D$ there exist
   uniquely determined non-negative real numbers $a_{i} \geq 0$ with
$$D = \sum_{i=1}^{r} a_{i}P_{i} \,\,\,\,\, \text{ and } \,\,\,\,\, \triangle(D) = \sum_{i=1}^{r} a_{i} \triangle(P_{i}),$$
   where the first sum indicates the numerical equivalence of divisors
   and the second sum is the Minkowski sum of convex bodies.
\end{theorem}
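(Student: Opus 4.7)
My plan is to combine the Bauer--K\"uronya--Szemberg (BKS) decomposition of $\Effbar(X)$ into Zariski chambers with the Lazarsfeld--Musta\c t\u a description of $\Delta(D)$ recalled above. Since $\Effbar(X)$ is rational polyhedral, only finitely many reduced irreducible curves can occur in the negative part of any Zariski decomposition, so $\Effbar(X)$ partitions into finitely many locally closed chambers $\Sigma_1,\ldots,\Sigma_N$; on each such chamber the support of $N_D$ is constant and the assignment $D\mapsto(P_D,N_D)$ is linear.

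First, for the fixed flag $(x,C)$ and any big and nef $D$ I would analyze the affine ray $\{D-tC\}$ for $t\in[0,\mu]$, where $\mu:=\sup\{s\in\R:D-sC\text{ is effective}\}$. As $t$ grows the ray crosses finitely many chamber walls at values $0=t_0<t_1<\ldots<t_k=\mu$, and on each subinterval $[t_{j-1},t_j]$ the Zariski decomposition of $D-tC$ varies linearly. Hence $\alpha(t)=\ord_x(N_{D-tC})$ and $\beta(t)=\alpha(t)+P_{D-tC}\cdot C$ are affine on each such subinterval, so $\Delta(D)$ is a polygon whose vertices correspond to the breakpoints $t_j$.

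Second, I would define $\MBxC$ by considering the polyhedral subdivision of $\Nef(X)$ induced by intersecting it with the BKS walls together with the hyperplanes $\{E\cdot C=0\}$ coming from the flag. Finiteness of the chamber decomposition makes this subdivision finite, and I take $P_1,\ldots,P_r$ to be the primitive extremal ray generators that occur in it. I then argue by induction on the number of walls crossed by the ray of $D$: at each step one selects $P_i\in\MBxC$ and $\lambda>0$ so that $D-\lambda P_i$ remains big and nef but its ray meets strictly fewer walls, while simultaneously the polygons satisfy $\Delta(D-\lambda P_i)=\Delta(D)-\lambda\Delta(P_i)$ as Minkowski difference. Iterating yields both a representation $D\numequiv\sum a_i P_i$ with $a_i\geq 0$ and the Minkowski identity $\Delta(D)=\sum a_i\Delta(P_i)$.

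The main obstacle is to verify the Minkowski identity as an \emph{equality} of bodies rather than just an inclusion, and to pin down uniqueness of the $a_i$. I would handle both interval by interval on $[0,\mu]$: the slopes of $\alpha,\beta$ and the lengths of the subintervals yield a linear system in $a_1,\ldots,a_r$ whose coefficients are the intersection numbers $P_i\cdot C$ and the local orders $\ord_x(N_{P_i-tC})$. Properly ordered, this system should be triangular with strictly positive diagonal, which forces uniqueness of the $a_i$'s and simultaneously promotes the Minkowski inclusion to an equality of convex polygons.
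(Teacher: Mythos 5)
First, a point of reference: the paper does not actually prove Theorem \ref{Mink} --- it is quoted from \cite{PatDav}, and the text only records the key construction from that proof, namely equation \eqnref{minkowskicoefficients}. Measured against that, your overall architecture is the right one: the BKS chamber decomposition, the analysis of the ray $D-tC$, the piecewise affine behaviour of $\alpha$ and $\beta$ between wall crossings (so that $\Delta(D)$ is a polygon with vertices at the breakpoints), and an induction on the number of walls the ray crosses are exactly the ingredients of the argument in \cite{PatDav}.

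The genuine gap is your definition of $\MBxC$. You propose to take the primitive extremal ray generators of the subdivision of $\Nef(X)$ cut out by the BKS walls and the hyperplanes attached to the flag. But those walls and hyperplanes only support faces of $\Nef(X)$ and do not cut through its interior (the nef cone is the closure of a single Zariski chamber), so your recipe essentially returns the extremal nef classes --- and that set demonstrably fails the theorem. On the del Pezzo surface $X_2$ with $C\in|H|$ the correct basis is $\{H,\,H-E_1,\,H-E_2,\,2H-E_1-E_2\}$; the class $2H-E_1-E_2=(H-E_1)+(H-E_2)$ lies in the relative interior of a $2$--face of $\Nef(X_2)$, yet it must be included because $\Delta(2H-E_1-E_2)$ is a triangle of area $1$ while $\Delta(H-E_1)+\Delta(H-E_2)$ is a segment. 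The missing idea is the construction the paper records: to each Zariski chamber $\Sigma$ with $\Neg(\Sigma)=\{N_1,\dots,N_r\}$ one attaches the nef class $M_\Sigma=dC+\sum a_iN_i$, where the $a_i$ solve the system $S(a_1,\dots,a_r)^T=-d(C.N_1,\dots,C.N_r)^T$; this forces $M_\Sigma\in N_i^\perp$ for all $i$, i.e. $M_\Sigma$ sits on the face $\Neg(\Sigma)^\perp$ of the nef cone, and nonnegativity of the $a_i$ uses that $S$ is negative definite so that $S^{-1}$ has nonpositive entries. This orthogonality is precisely what makes your inductive step function: one must subtract the element $M_\Sigma$ attached to a specific chamber met by the ray of $D$, not an arbitrary $P_i$, and the identity $\Delta(D)=\Delta(D-\lambda M_\Sigma)+\lambda\Delta(M_\Sigma)$ (as opposed to a mere inclusion) is verified by comparing the Zariski decompositions of $D-tC$ and of $(D-\lambda M_\Sigma)-tC$ chamber by chamber --- the step your sketch asserts via an unspecified ``triangular system'' but does not supply. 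Without the explicit $M_\Sigma$ neither the existence nor the uniqueness part of the statement goes through.
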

   The Theorem above justifies the following definition.
\begin{definition}[Minkowski basis]
   The set $\MBxC$ in Theorem \ref{Mink} is called the \emph{Minkowski basis} of $X$
   with respect to the flag $(x,C)$.
\end{definition}
\begin{remark}\label{rmk:minkowski basis not basis}
   Note that in general $\MBxC$ is not a basis of the Neron-Severi space $N^1(X)_{\R}$
   (treated as an $\R$--vector space).
\end{remark}
   The proof of Theorem \ref{Mink} in \cite{PatDav} gives in particular a simple way to construct Minkowski basis elements
   based on the  Bauer - K\"uronya - Szemberg decomposition of the big cone ${\rm Big}(X)$ \cite{BKS04}.
\begin{theorem}[BKS--decomposition]
   Let $X$ be a smooth complex projective surface. Then there is a locally finite decomposition of the big cone of $X$ into rational locally polyhedral subcones $\Sigma$ such that
   in the interior of each subcone $\Sigma$ the support $\Neg(\Sigma)$ of the negative part of the Zariski decomposition of the divisors in the subcone is constant.
\end{theorem}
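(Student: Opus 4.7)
I would index the decomposition by the possible negative supports. For each finite collection $\mathcal{N}=\{N_1,\dots,N_r\}$ of distinct irreducible curves on $X$ whose intersection matrix is negative definite, define
$$\Sigma_\mathcal{N}:=\overline{\{D\in{\rm Big}(X):\Neg(D)=\mathcal{N}\}},$$
with $\mathcal{N}=\emptyset$ corresponding to the nef cone. By uniqueness of the Zariski decomposition, the open strata $\{\Neg(D)=\mathcal{N}\}$ partition ${\rm Big}(X)$, so the content of the theorem is that each $\Sigma_\mathcal{N}$ is rational locally polyhedral and that the family $\{\Sigma_\mathcal{N}\}$ is locally finite.

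\textbf{Linear description of each subcone.} For $D$ in the interior of $\Sigma_\mathcal{N}$ write $D=P_D+\sum_i a_i(D)N_i$. The orthogonality $P_D\cdot N_i=0$ gives the linear system
$$\sum_j(N_i\cdot N_j)\,a_j(D)=D\cdot N_i\qquad(i=1,\dots,r),$$
whose matrix is negative definite and hence invertible over $\Q$. Thus each coefficient $a_i$, and so each entry of $P_D=D-\sum_i a_i(D)N_i$, is a rational linear functional of the intersection numbers $D\cdot N_k$. Membership in $\Sigma_\mathcal{N}$ reduces to the finitely many rational linear inequalities $a_i(D)\geq 0$ together with the a priori infinitely many nef conditions $P_D\cdot C\geq 0$ as $C$ ranges over irreducible curves outside $\mathcal{N}$.

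\textbf{Local finiteness via Hodge index.} Fix $D_0\in{\rm Big}(X)$ with $\Neg(D_0)=\{N_1,\dots,N_r\}$ and nef part $P_0$, and set
$$\mathcal{M}:=\{N_1,\dots,N_r\}\cup\{C\subset X\text{ irreducible}:P_0\cdot C=0\}.$$
The crux is that $\mathcal{M}$ is \emph{finite}. Since $P_0$ is big and nef, $P_0^2>0$, so by the Hodge index theorem the intersection form is negative definite on the hyperplane $P_0^\perp\subset N^1(X)_\R$. Hence any irreducible $C$ with $P_0\cdot C=0$ has $C^2<0$, and any such finite collection has negative definite Gram matrix, forcing linear independence of their classes in $N^1(X)_\R$. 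This bounds their number by $\rho(X)-1$, so $\mathcal{M}$ is a finite set of curves whose intersection matrix is negative definite. By continuity of the Zariski decomposition, every $D$ in a sufficiently small neighbourhood $U$ of $D_0$ satisfies $\Neg(D)\subseteq\mathcal{M}$. Consequently $U$ meets only the finitely many $\Sigma_{\mathcal{N}'}$ with $\mathcal{N}'\subseteq\mathcal{M}$, and on $U$ each such subcone is cut out by finitely many rational linear inequalities involving the functionals $D\mapsto D\cdot C$ for $C\in\mathcal{M}$. Local finiteness and rational local polyhedrality follow simultaneously.

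\textbf{Main obstacle.} The delicate point is the finiteness of irreducible curves in $P_0^\perp$, which is precisely what allows the infinite list of nef inequalities in the second paragraph to be pruned to finitely many near $D_0$. It rests squarely on two applications of Hodge index — first to force $C^2<0$ on $P_0^\perp$, then to obtain the negative definite form there — combined with the linear independence bound by $\rho(X)$. A secondary step is to invoke continuity of the Zariski decomposition as $D$ varies, guaranteeing that nearby negative supports stay inside $\mathcal{M}$; this is classical but must be cited carefully.
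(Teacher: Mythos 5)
First, note that the paper does not prove this statement at all: it is quoted from Bauer--K\"uronya--Szemberg \cite{BKS04}, so there is no in-paper argument to compare against. Your sketch does follow the strategy of that reference — index chambers by negative supports, describe each chamber via the linear system attached to the negative definite Gram matrix, and control the combinatorics by the Hodge index theorem. Those parts are sound, up to one small circularity: to conclude that the Gram matrix of the curves in $P_0^\perp$ is negative definite you need their classes to be linearly independent, which is exactly what you are using the negative definiteness to prove. The standard repair is the Zariski-type argument: if $\sum_{i\in I}a_iC_i=\sum_{j\in J}b_jC_j$ with positive coefficients and $I\cap J=\emptyset$, then $0\le\bigl(\sum a_iC_i\bigr)\cdot\bigl(\sum b_jC_j\bigr)=\bigl(\sum a_iC_i\bigr)^2\le0$, forcing $\sum a_iC_i=0$ in $N^1(X)_\R$ and contradicting $C_i\cdot h>0$ for $h$ ample.

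The genuine gap is the local finiteness step. ``Continuity of the Zariski decomposition'' does not deliver a single neighbourhood $U$ of $D_0$ with $\Neg(D)\subseteq\mathcal{M}$ for all $D\in U$. Continuity of $D\mapsto N_D$ at $D_0$ only says that $\mathrm{mult}_C(N_D)\to\mathrm{mult}_C(N_{D_0})=0$ for each \emph{fixed} curve $C\notin\mathcal{M}$; this is perfectly consistent with $C$ entering $\Neg(D)$ with small positive multiplicity for $D$ arbitrarily close to $D_0$, and you must exclude infinitely many candidate curves simultaneously, with no a priori uniform bound on their degrees. In \cite{BKS04} this is a separate lemma (for $D'$ near $D_0$ one has $\Neg(D')\subseteq\mathrm{Null}(P_{D_0})$), proved not by continuity but by combining (i) the monotonicity $\Neg(B+tA)\subseteq\Neg(B)$ for $A$ ample and $t\ge0$, which lets one write every nearby $D'$ as $\bigl(D_0-\epsilon\sum A_i\bigr)+\sum t_iA_i$ with $t_i>0$ for a basis of ample classes $A_i$, with (ii) a stabilization argument along the ray $s\mapsto D_0-s\sum A_i$ using the bound $\#\Neg(\cdot)\le\rho(X)-1$ and the explicit linear formula for the Zariski decomposition on a stratum with constant negative support. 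Without an argument of this kind your neighbourhood $U$ is never produced, and both the local finiteness and the pruning of the infinitely many nef inequalities to finitely many near $D_0$ collapse.
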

   Now, the idea of \L uszcz-\'Swidecka and Schmitz is to assign to a chamber $\Sigma$
   an element in the Minkowski basis $M_{\Sigma}$. Specifically, let $C$ be a big and nef curve in the interior of $\Sigma$.
   Then $M_{\Sigma}= dC + \sum_{i=1}^r a_{i}N_i$, where $N_1,\dots,N_r \in \Neg(\Sigma)$ and $a_{i}$ are coefficients, which are the solution of the following system of equations
   \begin{equation}
   \label{minkowskicoefficients}
   S(a_{1}, ..., a_{r})^{T} = -d(C.N_{1}, ..., N_{r})^{T},
   \end{equation}
   where $S$ is the $r \times r$ intersection matrix of negative curves $N_{1}, ..., N_{r}$.
   Since $S$ is negatively defined, thus by an auxiliary result in \cite{BKS04} the inverse matrix $S^{-1}$ has only negative entries and thus all numbers $a_{i}$ are non-negative.

   It is convenient to work in the sequel with a compact slice $\Nefh(X)$ of the nef cone $\Nef(X)$
   defined as
   $$\Nefh(X)=\left\{D\in\Nef(X):\; D.H=1\right\}$$
   for a fixed ample divisor $H$ on $X$. We denote by $f_i$
   the number of $i$--dimensional faces of $\Nefh(X)$ for $i=0,\ldots,\rho(X)-1$.
   Moreover we write $f_0=(f_0)_b+(f_0)_{nb}$,
   where $(f_0)_b$ is the number of big vertices in $\Nefh(X)$
   and $(f_0)_{nb}$ is the number of non-big vertices, see also Lemma \ref{lem:nef non big}.

   Finally, we write $\NnB(X)$ for the number of numerical equivalence classes
   of nef and non-big integral divisors in $\Nefh(X)$ and we write $\Zar(X)$ for the
   number of Zariski chambers in the BKS-decomposition of $\Bigg(X)$.
\section{The cardinality of Minkowski bases}
   In the view of Remark \ref{rmk:minkowski basis not basis} it is natural to ask how many elements there are
   in the Minkowski basis. We will show here that the answer depends on the choice of the flag and that the number
   \begin{equation}\label{eq:number of elements}
       1 + \NnB(X) + \Zar(X)
   \end{equation}
   is a sharp upper bound for the number of elements in the Minkowski basis.
   The number of negative curves on surfaces with $\overline{{\rm Eff}(X)}$ rational polyhedral
   is finite, hence the number of Zariski chambers on such surfaces is finite as well.
   This number can be large. For example, for del Pezzo surfaces $X_{i}$ obtained as the blow ups of $\mathbb{P}^2$
   in $i \in \{1, ..., 8\}$ general points we have
 \begin{equation}\label{eq:number of chambers on del pezzo}
      \begin{array}{c|c|c|c|c|c|c|c|c}
         i        & 1 & 2 & 3 & 4 & 5 & 6 & 7 & 8  \\ \hline
         \Zar(X_{i})   & 2 & 5 & 18 & 76 & 393 & 2\, 764 & 33\, 645 & 1\, 501\, 681
      \end{array},
\end{equation}
   see \cite{BFN10}.

   Now, we explain that the second summand in \eqnref{eq:number of elements} is also finite.
\begin{lemma}[Nef, non-big divisors]\label{lem:nef non big}
   Let $X$ be a surface with $\overline{{\rm Eff}(X)}$ rational polyhedral.
   Then there is only a finite number of nef and non-big divisors in $\Nefh(X)$.
\end{lemma}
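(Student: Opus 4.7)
The plan is to combine the rational polyhedrality of $\Nef(X)$ with the Hodge index theorem to show that every non-big nef class sits on an extremal ray of $\Nef(X)$. Since $\Effbar(X)$ is rational polyhedral by assumption and $\Nef(X)$ is its dual cone under the intersection pairing on a surface, the nef cone is rational polyhedral as well, and in particular has only finitely many extremal rays. Each such ray meets the affine slice $\Nefh(X)$ in a single point (because $D\cdot H>0$ for every nonzero nef $D$), so it will suffice to prove that every non-big nef divisor is numerically proportional to a generator of one of these rays.

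The key step uses that on a surface a nef divisor is non-big precisely when its self-intersection vanishes. Let $D$ be nef with $D^2=0$ and let $F$ be the unique face of $\Nef(X)$ containing $D$ in its relative interior. The quadratic form $Q(E)=E^2$ is non-negative on the nef cone and attains its minimum $0$ at $D$; since $D$ lies in the relative interior of $F$, expanding $Q(D+tV)=2t\,D\cdot V+t^2 V^2\ge 0$ for $V\in\mathrm{span}(F)$ and small $|t|$ of either sign forces $D\cdot V=0$, i.e.\ $\mathrm{span}(F)\subset D^\perp$. By the Hodge index theorem, the intersection form restricted to $D^\perp$ is negative semi-definite with one-dimensional kernel $\R D$. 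Combined with $E^2\ge 0$ for $E\in F$, this forces $E^2=0$ throughout $F$, and the kernel statement then gives $E\in\R D$ for every $E\in F$. Hence $F=\R_{\ge 0}D$ is one of the finitely many extremal rays of $\Nef(X)$, completing the proof.

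The main technical obstacle is the Hodge-index statement that $D^\perp$ is negative semi-definite with kernel exactly $\R D$ whenever $D$ is a nonzero isotropic nef class. This is classical but merits verification: choosing an ample $H$ one decomposes $N^1(X)_\R=\R H\oplus H^\perp$ with $H^\perp$ negative definite, writes $D=aH+V$ and $E=bH+W$ with $V,W\in H^\perp$, and applies the Cauchy--Schwarz inequality on $H^\perp$ to deduce $E^2\le 0$ on $D^\perp$, with equality precisely when $E\in\R D$. Once this input is available, the finiteness of non-big nef divisors in $\Nefh(X)$ follows by a direct count of extremal rays.
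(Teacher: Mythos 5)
Your argument is correct, and it takes a somewhat different and in fact more complete route than the paper's. The paper argues by contradiction: it assumes two nef non-big divisors $N_1,N_2$ whose whole connecting segment lies in a common face of $\Nefh(X)$ and consists of non-big classes, deduces $(tN_1+(1-t)N_2)^2=0$ for all $t$, hence $N_1^2=N_2^2=N_1\cdot N_2=0$, and gets a contradiction with the index theorem (a two-dimensional totally isotropic subspace cannot exist when the form has signature $(1,\rho-1)$). The step from ``infinitely many nef non-big points in $\Nefh(X)$'' to ``a whole segment of isotropic nef classes'' is left implicit there. You instead prove directly that every nonzero nef class $D$ with $D^2=0$ spans an extremal ray of $\Nef(X)$: the perturbation $Q(D+tV)\ge 0$ for both signs of $t$ forces $\mathrm{span}(F)\subset D^\perp$, and the Hodge index theorem (negative semi-definiteness of $D^\perp$ with kernel $\R D$) collapses the face $F$ to $\R_{\ge 0}D$; finiteness then follows from rational polyhedrality of $\Nef(X)=\Effbar(X)^\vee$. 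The underlying Hodge-theoretic input is the same in both proofs, but your version fills the gap in the reduction and yields the sharper structural statement—that the nef non-big classes in $\Nefh(X)$ are exactly the non-big vertices—which the paper in fact uses later (the count $(f_0)_{nb}$ in the Proposition) without separate justification. All the auxiliary facts you invoke (bigness of a nef surface class is equivalent to $D^2>0$; $D\cdot H>0$ for nonzero nef $D$; the dual of a rational polyhedral cone is rational polyhedral) are standard and correctly applied.
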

\proof
  Assume to the contrary that there are two divisors $N_{1}, N_{2}$, which are nef and not big,
  such that for all $t \in [0, 1]$ the divisors $tN_{1} + (1-t)N_{2}$ lie on the common face
  (here the rational polyhedrality assumption comes into the play). Thus
   $(tN_{1} + (1-t)N_{2})^2 = 0 $ for every $t \in [0, 1]$, what implies that $N_{1}.N_{2} = 0$.
   It means that the intersection matrix of $N_{1}, N_{2}$ is the zero matrix of size $2 \times 2$,
   which contradicts the index theorem.
\endproof
   Now we relate the number in \eqnref{eq:number of elements} to the geometry of the solid $\Nefh(X)$.
\begin{proposition}
   Let $X$ be a smooth complex projective variety with $\overline{{\rm Eff}(X)}$ rational polyhedral.
   Then
   \begin{equation}\label{eq:formula A}
      \sum_{i=0}^{\rho-1} f_{i} = 1 + \NnB(X) + \Zar(X).
   \end{equation}
\end{proposition}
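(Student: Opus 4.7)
The plan is to partition the faces of $\Nefh(X)$ into two families and set up a correspondence between them and the three summands on the right of \eqref{eq:formula A}. As a first step I would establish the following dichotomy: for every face $F$ of $\Nefh(X)$ the relative interior of $F$ is either entirely contained in the big cone $\Bigg(X)$ or entirely disjoint from it. This follows because bigness of a nef class is detected by $D^{2}>0$ and, by the Hodge index theorem, the intersection form on $N^{1}(X)_{\R}$ has Lorentzian signature, so convexity rules out any intermediate possibility. Call $F$ a \emph{big face} or a \emph{non-big face} accordingly.

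For non-big faces I would argue that each one is a vertex. If $N_{1}$ and $N_{2}$ were two distinct non-big nef classes in the relative interior of a common face, the identity $(tN_{1}+(1-t)N_{2})^{2}\equiv 0$ on $[0,1]$ would force $N_{1}.N_{2}=0$; combined with $N_{1}^{2}=N_{2}^{2}=0$ this contradicts the signature of the intersection form, exactly as in the proof of Lemma~\ref{lem:nef non big}. Hence the non-big faces are precisely the non-big vertices of $\Nefh(X)$, and they account for the summand $\NnB(X)$.

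The third step puts big faces into bijection with Zariski chambers of $\Bigg(X)$. To a big face $F$ I attach the set
\[
S_{F}=\{\,C\text{ irreducible curve on }X\;:\;D.C=0\text{ for every }D\in F\,\}.
\]
Applying the Hodge index theorem to any big nef class in the relative interior of $F$ shows that each $C\in S_{F}$ satisfies $C^{2}<0$ and that the intersection matrix of $S_{F}$ is negative definite, so $S_{F}$ is the support of a Zariski chamber in the BKS decomposition. Conversely, to a chamber $\Sigma$ with $\Neg(\Sigma)=\{N_{1},\ldots,N_{r}\}$ I attach the slice
\[
F_{\Sigma}=\{\,D\in\Nefh(X)\;:\;D.N_{i}=0 \text{ for } i=1,\ldots,r\,\},
\]
which is a big face of $\Nefh(X)$. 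The top-dimensional face $\Nefh(X)$ itself is singled out, as it corresponds to the trivial (nef) chamber; it furnishes the standalone summand $1$. The remaining proper big faces are matched with the $\Zar(X)$ other Zariski chambers.

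The main technical difficulty I foresee is verifying that $F\mapsto S_{F}$ and $\Sigma\mapsto F_{\Sigma}$ are mutually inverse; the delicate point is ruling out the possibility of a negative curve outside $\Neg(\Sigma)$ being orthogonal to the whole of $F_{\Sigma}$. This follows from the constancy of the negative-part support throughout the interior of each BKS chamber, together with the finiteness of negative curves guaranteed by the rational polyhedrality of $\Effbar(X)$. Combining the three contributions $\NnB(X)$, $\Zar(X)$ and the extra $1$ from the top face then yields the claimed identity.
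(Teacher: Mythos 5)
Your proposal follows essentially the same route as the paper's proof: partition the faces of $\Nefh(X)$ into the top face, the non-big vertices, and the proper faces meeting the big cone, and match the last group with Zariski chambers via orthogonality to negative curves. Your write-up is somewhat more self-contained in places where the paper simply cites Nakamaye's theorem and \cite{BKS04} — in particular the big/non-big dichotomy on relative interiors and the negative definiteness of the Gram matrix of $S_F$, both of which you derive from the Hodge index theorem — but the underlying correspondence is identical.

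One bookkeeping point deserves attention, though. You assign the nef chamber to the top face (the summand $1$) and then match the proper big faces with ``the $\Zar(X)$ \emph{other} Zariski chambers.'' Under the paper's definition $\Zar(X)$ counts \emph{all} Zariski chambers, the nef chamber included (e.g.\ $\Zar(X_1)=2$ in \eqnref{eq:number of chambers on del pezzo}: the nef chamber and the $E_1$-chamber), so there are only $\Zar(X)-1$ others; taken literally, your correspondence therefore yields $\NnB(X)+\Zar(X)$ rather than $1+\NnB(X)+\Zar(X)$. This tension is already present in the paper's own argument (its assignment $\Sigma\mapsto\Neg(\Sigma)^\perp$ sends the nef chamber to the top face, which was already accounted for by the $1$), and the del Pezzo computations in the subsequent example are in fact consistent with $\NnB+\Zar$. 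So you have not introduced a new error, but you should state explicitly whether your $\Zar(X)$ includes the nef chamber, since the identity as printed is sensitive to exactly this convention.
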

\begin{proof}
   Let $G$ be a face of $\Nefh(X)$. If $G=\Nefh(X)$, then this corresponds to $f_{\rho-1}=1$
   and is accounted for by $1$ on the right side in the formula \eqnref{eq:formula A}.
   Otherwise we distinguish two cases:
   either $G$ is a vertex of $\Nefh(X)$ which is not big, hence $G^2=0$
   or $G$ is a big vertex or a face of dimension $\geq 1$.

   The first case occurs $(f_0)_{nb}$ times and is accounted for
   by the second summand on the right in \eqnref{eq:formula A}.

   The second case corresponds to the third summand in \eqnref{eq:formula A}.
   Indeed, given a nef and big divisor $D$ there exists a Zariski chamber
   $\Sigma_D$ with $\Neg(\Sigma_D)=D^\perp$. This follows from
   Nakamaye's result \cite[Theorem 1.1]{Nak00}.
   Thus the inequality $\leq $ in \eqnref{eq:formula A} is established.

   For the reverse inequality it suffices to show that
   distinct Zariski chambers determine distinct faces of $\Nefh(X)$.
   To this end let $\Sigma$ be a Zariski chamber.
   By \cite{BKS04} there is a face of $\Nefh(X)$ orthogonal to the
   support of $\Neg(\Sigma)$. The injectivity of this assigment
   $\Sigma \to \Neg(\Sigma)^\perp$ follows again from the aforementioned
   result of Nakamaye.
\end{proof}
   Now we are in a position to prove our main result.
\begin{theorem}\label{thm:main}
   Let $X$ be a smooth complex projective variety with $\overline{{\rm Eff}(X)}$ rational polyhedral.
   Given a flag $(x, A)$, where $A$ is an ample curve and $x$ is a smooth point on $A$, there is
   $$\#\MB(x,A) = 1 + \NnB(X) + \Zar(X).$$
\end{theorem}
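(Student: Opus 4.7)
The plan is to apply the preceding proposition to rewrite the target count $1+\NnB(X)+\Zar(X)$ as $\sum_{i=0}^{\rho-1} f_i$, the total number of faces of $\Nefh(X)$, and then exhibit a bijection between $\MB(x,A)$ and the set of faces of $\Nefh(X)$.

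I would define a map $\Phi$ on the faces of $\Nefh(X)$ as follows. Send the top face $\Nefh(X)$ to the normalized class of the ample flag curve $A$. For each non-big vertex $v$, set $\Phi(v) := v$ itself. For every other face $F$ (either a big vertex or a higher-dimensional proper face), set $\Phi(F) := M_{\Sigma_F}$, the \L uszcz-\'Swidecka--Schmitz basis element $dC + \sum_i a_i N_i$ attached to the Zariski chamber $\Sigma_F$ whose negative support satisfies $\Neg(\Sigma_F)^\perp \cap \Nefh(X) = F$. Existence and uniqueness of such a $\Sigma_F$ are exactly what the proof of the proposition established via Nakamaye's theorem. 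Injectivity of $\Phi$ is then immediate: the image of the top face is ample; images of non-big vertices are not big; and each $M_{\Sigma_F}$ is big-and-nef and lies on $F$ itself by the orthogonality clause of Zariski decomposition, so different faces yield numerically distinct classes.

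To see that the image of $\Phi$ is contained in $\MB(x,A)$, the $M_{\Sigma_F}$ are produced directly by the construction recalled after Theorem \ref{Mink}; each non-big extremal nef class must appear in $\MB(x,A)$ because its Okounkov body is lower-dimensional and cannot be realized as a Minkowski sum involving only full-dimensional Okounkov bodies of big divisors; and $[A]$ is needed to span the ample direction of any decomposition. Conversely, the uniqueness clause of Theorem \ref{Mink} applied to a chamber-by-chamber decomposition forces every Minkowski basis element to lie in the image of $\Phi$.

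The main obstacle is this last minimality/reverse-inclusion step: one must verify both that no Minkowski basis element lies outside our explicit list and that none of the listed elements can be written as a nontrivial Minkowski combination of the others. The ampleness of $A$ — strictly stronger than the big-and-nef hypothesis of Theorem \ref{Mink} — is essential here, since it places the flag curve in the interior of $\Nef(X)$ so that every face of $\Nefh(X)$ is realized as some chamber's orthogonal complement (or as the top face itself), and no two distinct faces collapse onto the same basis element.
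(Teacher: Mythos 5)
Your core argument coincides with the paper's. The only substantive step in the paper's proof is that the assignment $\Sigma\mapsto M_\Sigma$ from Zariski chambers to the constructed basis elements is injective, and that this is precisely where the ampleness of $A$ (rather than the big-and-nef hypothesis of Theorem \ref{Mink}) is used; you have correctly located that crux, and routing the count through the Proposition is the bookkeeping the paper intends. However, your claim that injectivity is ``immediate'' because each $M_{\Sigma_F}$ ``lies on $F$ itself'' is not a proof: distinct faces of $\Nefh(X)$ are not disjoint (a face contains all of its subfaces), so elements attached to nested faces could a priori coincide. The missing computation --- which is the entire content of the paper's proof --- is the following: writing $M_\Sigma=bA+\sum_j a_jN_j$ with $N_j\in\Neg(\Sigma)$ and $a_j\ge 0$, for any negative curve $N\notin\Neg(\Sigma)$ one has $M_\Sigma\cdot N=bA\cdot N+\sum_j a_jN_j\cdot N\ge bA\cdot N>0$, since distinct irreducible curves meet non-negatively. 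Hence $\Neg(\Sigma)$ is recovered from $M_\Sigma$ as the set of negative curves orthogonal to it, and distinct chambers give distinct elements. You name ampleness as essential and assert the conclusion (``no two distinct faces collapse''), but you never carry out this step, and it is the one place in the whole theorem where something has to be verified.

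Two further remarks. The ``minimality/reverse-inclusion'' obstacle you flag at the end is not an obstacle in the paper's setting: $\MB(x,A)$ is by definition the set output by the \L uszcz-\'Swidecka--Schmitz construction (one element per Zariski chamber together with the extremal nef non-big classes), not a minimal Minkowski generating set, so the theorem reduces to counting the distinct members of that explicit list; no indecomposability argument is needed or given in the paper. Finally, a bookkeeping caveat you inherit from the Proposition: the nef chamber is itself a Zariski chamber, counted in $\Zar(X)$, and its basis element is a multiple of $A$, which in your bijection is already the image of the top face of $\Nefh(X)$; as described, your map sends only \emph{proper} faces to chambers, so the chamber side of your bijection has cardinality $\Zar(X)-1$. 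One should be explicit about whether the summand $1$ and the nef chamber count the same element (the paper's del Pezzo table is consistent with the total $\NnB(X)+\Zar(X)$), since otherwise the face count and the stated right-hand side differ by one.
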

\begin{proof}
   Given Zariski chambers $\Sigma_1$, $\Sigma_2$ with $\Neg(\Sigma_1)=\left\{N_1,\ldots,N_{n} \right\}$ and $\Neg(\Sigma_2)=\left\{N_{n+1},\ldots,N_{m} \right\}$, one associates to them the Minkowski basis elements
   $$M_{\Sigma_1}= b_{1}A + \sum_{j=1}^{n} a_{i}N_j \,\,\, \text{ and } \,\,\, M_{\Sigma_2}=b_{1}A + \sum_{j=n+1}^{m} a_{i}N_j.$$
   Suppose that $\Neg(\Sigma_{1}) \neq \Neg(\Sigma_{2})$ and assume to the contrary that $M_{\Sigma_{1}} = M_{\Sigma_{2}}$.  Furthermore we may assume after reordering that the symmetric difference between these negative supports is $\{N_{k}, ..., N_{m}\}$ for a certain $k \in \{1, ..., m\}$. By the construction of Minkowski basis elements \cite{PatDav} we know that $$M_{\Sigma_{1}} = M_{\Sigma_{2}} \in N_{i}^{\perp} \,\,\, \text{ for all } \,\,\, i \in \{1, ..., m \}.$$
   This implies that for every $N_{i}$ we have  $N_{i}.M_{\Sigma_{1}} = N_{i}.M_{\Sigma_{2}} = 0$. Let us take one of the elements from $\{N_{k}, ..., N_{m} \}$. These implies in particular that $N_{r}.A = 0$, a contradiction. Proceed in the same spirit one shows that the symmetric difference is empty and $\Neg(\Sigma_{1}) =\Neg(\Sigma_{2})$, what ends the proof.
\end{proof}

\begin{example}[Del Pezzo surfaces]
   Using the above theorem we can compute the cardinality of Minkowski basis for del Pezzo surfaces $X_{i}$ with respect to a fixed ample flag $(x,A)$.
   To this end we need to compute the number of nef non-big curves on $X_i$. Let
   $C=aH-\sum b_jE_j$ be such a curve, where as usually $\pi_i:X_i\to\P^2$
   is the blow up of $\mathbb{P}^2$ at $i$ general points with exceptional divisors $E_1,\ldots,E_i$ and
   $H=\pi_i^*(\calo_{\P^2}(1))$.
   First we observe that $C$ is a rational curve. This follows from the adjunction since
   $$2(p_a(C)-1)=K_{X_i}\cdot C+C^2=K_{X_i}\cdot C<0$$
   implies $p_a(C)=0$. Hence
\begin{equation}\label{eq:pa equal zero}
   2=-K_{X_i}\cdot C=3a-\sum b_j.
\end{equation}
   On the other hand
\begin{equation}\label{eq:C2 equal zero}
   0=C^2=a^2-\sum b_j^2.
\end{equation}
   It is elementary to check that \eqnref{eq:pa equal zero} and \eqnref{eq:C2 equal zero}
   have only finitely many integral solutions, listed (up to permutation) in the following table
$$\begin{tabu}{c|ccccccccc}
   & a & b_1 & b_2 & b_3 & b_4 & b_5 & b_6 & b_7 & b_8 \\
   \hline
   C(1) & 1 & 1 & 0 & 0 & 0 & 0 & 0 & 0 & 0 \\
   C(2) & 2 & 1 & 1 & 1 & 1 & 0 & 0 & 0 & 0 \\
   C(3) & 3 & 2 & 1 & 1 & 1 & 1 & 1 & 0 & 0 \\
   C(4) & 4 & 2 & 2 & 2 & 1 & 1 & 1 & 1 & 0 \\
   C(5) & 5 & 3 & 2 & 2 & 2 & 1 & 1 & 1 & 1 \\
   C(6) & 6 & 3 & 3 & 2 & 2 & 2 & 2 & 1 & 1 \\
   C(7) & 7 & 3 & 3 & 3 & 3 & 2 & 2 & 2 & 1 \\
   C(8_a) & 8 & 4 & 3 & 3 & 3 & 3 & 2 & 2 & 2 \\
   C(8_b) & 8 & 3 & 3 & 3 & 3 & 3 & 3 & 3 & 1 \\
   C(9) & 9 & 4 & 4 & 3 & 3 & 3 & 3 & 3 & 2 \\
   C(10) & 10 & 4 & 4 & 4 & 4 & 3 & 3 & 3 & 3 \\
   C(11) & 11 & 4 & 4 & 4 & 4 & 4 & 4 & 4 & 3
\end{tabu}$$
   Note that all solutions can be obtained from $C(1)$ applying standard Cremona
   transformations. This verifies again that an irreducible nef non-big curve
   on a del Pezzo surface is rational.

   Counting all curves $C(j)$ on the appropriate surface $X_i$ and taking
   \eqnref{eq:number of chambers on del pezzo} into account we have
  $$
      \begin{array}{c|c|c|c|c|c|c|c|c}
         i        & 1 & 2 & 3 & 4 & 5 & 6 & 7 & 8  \\ \hline
         \#\MB(x,A)   & 3 & 7 & 21 & 81 & 403 & 2\, 797 & 33\, 764 & 1\, 503\, 721
      \end{array}.
 $$
\end{example}
\begin{remark}\rm
   For $r=0,\dots,8$ let $X_r$ be a del Pezzo surface arising as the blow up of the projective plane $\P^2$
   in $r$ general points. Let $C$ be a curve in the anti-canonical system $-K_{X_r}$.
   There is a Weyl group action on $\Effbar(X)$, which fixes the anti-canonical class,
   see \cite{Man}. In this situation, there is a Weyl invariant Minkowski basis $\MBxC$.
   Indeed, it can be constructed taking for each $j=0,\ldots,\rho(X)-1$
   an element $M_j$ corresponding to a $j$--dimensional face of $\Nefh(X)$
   and then all of its images under the action of the Weyl group, see also \cite[Section 3.1]{BKS04}.
\end{remark}
   Now we show that for a special choice of a flag $(x,C)$, it might happen that
   the number of divisors in the Minkowski basis is strictly smaller than
   the number in \eqnref{eq:number of elements}. In fact we get Minkowski bases with any number
   of elements between $3$ and $7$ on the del Pezzo surface $X_2$.
\begin{example}
   For del Pezzo surface $X_{2}$ we have the following possibilities:
\begin{itemize}
\item Fix a toric flag for $X_{2}$, i.e. $(x,L_1)$ with $L_1\in|H-E_1|$ and $x=L_1\cap L_2$ for a fixed line $L_2\in|H - E_{2}|$.
   Then by \cite{PSU13}
   $$\MB(x,L_1) = \{H, H - E_{1}, H - E_{2} \}.$$
\end{itemize}
   From now on $x$ denotes a general point on the flag curve $C$.
\begin{itemize}
\item For the flag $(x, C)$, where $C \in|H|$, we have
   $$\MBxC = \{H, H - E_{1}, H - E_{2}, 2H - E_{1} - E_{2} \}.$$
\item For a curve $C \in |2H-E_{1}|$, we get
   $$\MBxC = \{ 2H-E_{1}, H - E_{1}, H - E_{2}, H, 3H - 2E_{1} - E_{2} \}.$$
\item For a curve $C \in |2H - E_{1} - E_{2}|$ we have
   $$\MBxC = \{2H - E_{1} - E_{2}, H - E_{1}, H - E_{2}, 2H - E_{1}, 2H - E_{2} , H \}.$$
\item For the anticanonical flag $(x,C)$ with a curve $C\in| - K_{X_2}|$ we have
   $$\MBxC = \{ -K_{X_2}, H, H-E_{1}, H-E_{2}, 2H - E_{1} - E_{2}, 3H-E_{1}, 3H-E_{2} \}.$$
\end{itemize}
\end{example}
\begin{remark}\rm
   It would be interesting to know effective lower bounds on the number
   of elements in the Minkowski basis.
\end{remark}
\paragraph*{\emph{Acknowledgement.}}
   The first author would like to express his gratitude to Jarek Wi\'sniewski for many interesting conversations
   and for suggesting several problems related to Okounkov bodies.
   Both authors would like to thank Polish Railways. This research has originated
   during a long train journey from Warsaw to Cracow.

   Also authors would like to express their gratitude to David Schmitz for pointing out a mistake in the previous version of this note.


\bigskip \small

\bigskip
   Piotr Pokora,
   Instytut Matematyki,
   Pedagogical University of Cracow,
   Podchor\c a\.zych 2,
   PL-30-084 Krak\'ow, Poland.

\nopagebreak
   \textit{E-mail address:} \texttt{piotrpkr@gmail.com}

\bigskip
   Tomasz Szemberg,
   Instytut Matematyki,
   Pedagogical University of Cracow,
   Podchor\c a\.zych 2,
   PL-30-084 Krak\'ow, Poland.

\nopagebreak
   \textit{E-mail address:} \texttt{tomasz.szemberg@gmail.com}


\end{document}